
\documentclass[11pt]{article}

\usepackage{amsmath,amscd,amsfonts,amsthm,mathptmx}
\usepackage{amssymb}
\usepackage{xspace}
\usepackage[all,tips]{xy}
\usepackage{graphicx}
\usepackage{verbatim}
\usepackage{syntonly}
\usepackage{hyperref}
\usepackage{epsfig}
\usepackage{wrapfig}


\newcommand{\shrinkmargins}[1]{
  \addtolength{\textheight}{#1\topmargin}
  \addtolength{\textheight}{#1\topmargin}
  \addtolength{\textwidth}{#1\oddsidemargin}
  \addtolength{\textwidth}{#1\evensidemargin}
  \addtolength{\topmargin}{-#1\topmargin}
  \addtolength{\oddsidemargin}{-#1\oddsidemargin}
  \addtolength{\evensidemargin}{-#1\evensidemargin}
  }

\shrinkmargins{1.0}



\DeclareMathOperator{\SL}{SL}

\DeclareMathOperator{\Mod}{PMod}\DeclareMathOperator{\Gal}{Gal}

\DeclareMathOperator{\Aut}{Aut}\DeclareMathOperator{\Out}{Out}
\DeclareMathOperator{\Push}{Push}\DeclareMathOperator{\Nor}{N}

\DeclareMathOperator{\PMod}{Mod}\DeclareMathOperator{\Stab}{Stab}

\newcommand{\B}[1]{\mathbf{#1}}
\newcommand{\field}[1]{\mathbf{#1}}

\newcommand{\Q}{\field{Q}}

\newcommand{\Z}{\field{Z}}
\newcommand{\F}{\field{F}}

\newcommand{\R}{\field{R}}
\newcommand{\C}{\field{C}}
\renewcommand{\P}{\field{P}}

\newcommand{\ra}{\rightarrow}

\newcommand{\HH}{\mathcal{H}}

\newcommand{\inj}{\hookrightarrow}

\newcommand{\MM}{\mathcal{M}}

\newcommand{\GalQ}{\Gal(\overline{\Q}/\Q)}

\newcommand{\innp}[1]{\left< #1 \right>}
\newcommand{\abs}[1]{\left\vert#1\right\vert}
\newcommand{\set}[1]{\left\{#1\right\}}

\newcommand{\beq}{\begin{displaymath}}
\newcommand{\eeq}{\end{displaymath}}
\newcommand{\beqn}{\begin{equation}}
\newcommand{\eeqn}{\end{equation}}




\newcommand{\nid}{\noindent}



\theoremstyle{plain}
\newtheorem{thm}{Theorem}[section]
\newtheorem{prop}[thm]{Proposition}

\newtheorem{lem}[thm]{Lemma}

\theoremstyle{definition}

\theoremstyle{remark}

\let\oldmarginpar\marginpar
\renewcommand\marginpar[1]{\-\oldmarginpar[\raggedleft\footnotesize #1]%
{\raggedright\footnotesize #1}}


\usepackage{fancyhdr}

\pagestyle{fancy} \fancyhf{} \fancyhead[R]{\bfseries \thepage}
\fancyhead[L]{\bfseries Arithmetic Veech sublattices of $\SL(2,\Z)$}
\fancypagestyle{plain}


\title{\textbf{Arithmetic Veech sublattices of $\SL(2,\Z)$}}
\author{Jordan S. Ellenberg\thanks{Partially supported by NSF-CAREER Grant DMS-0448750 and a Sloan research fellowship.}~~~and D. B. McReynolds\thanks{Partially supported by an NSF postdoctoral fellowship.}}


\begin{document}
\maketitle

\begin{abstract}
\noindent We prove that every algebraic curve $X/\overline{\Q}$ is birational over $\C$ to a Teichm\"{u}ller curve. This result is a corollary of our main theorem, which asserts that most finite index subgroups of $\SL(2,\Z)$ are Veech groups.
\end{abstract}

\noindent keywords: \emph{algebraic curve, mapping class group, Teichm\"{u}ller curve, Veech group}.\smallskip

\noindent MSC code: 32G15, 37D40.

\section{Introduction}

\noindent Write $\MM_{g,[n]}$ for the moduli space of genus $g$ Riemann surfaces with $n$ (unordered) punctures. A {\em Teichm\"{u}ller curve} is a holomorphic curve $f\colon C \ra \MM_{g,[n]}$ such that $f$ is generically one-to-one and is a local isometry of Kobayashi metrics. These special immersed curves in $\MM_{g,[n]}$ have garnered interest for some time (especially in the unpunctured case $n=0$) and are central objects in both Teichm\"{u}ller and Grothendieck--Teichm\"{u}ller theory. Additionally, these curves and the Riemann surfaces they parameterize have ties to the dynamics of polygonal billiards (see for instance \cite{GJ}, \cite{Lochak}, \cite{McMullen}, and \cite{Moeller}).  McMullen proved \cite{McMullen:rigidity} that every Teichm\"{u}ller curve has a model as an algebraic curve over $\overline{\B{Q}}$ (see also \cite{Lochak} and \cite{Moeller}). The main purpose of this article is to prove a kind of converse.

\begin{thm}\label{th:main}
If $X/\overline{\B{Q}}$ is an algebraic curve, then there exists a Teichm\"{u}ller curve $C$ birational to $X_\B{C}$.
\end{thm}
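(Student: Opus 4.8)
The plan is to produce the required curve as an \emph{origami}, that is, as the $\SL_2(\R)$--orbit of a square-tiled surface, and to arrange that its base is $X_\C$. Recall that an origami of degree $d$ is a translation surface presented as a branched cover $\pi\colon Y\to E$ of the standard torus $E=\C/\Z^2$, unramified away from the origin. Its $\SL_2(\R)$--orbit inside the relevant stratum of abelian differentials is always closed (the Veech group is a finite-index subgroup of $\SL_2(\Z)$, hence a lattice), so by Veech's dichotomy it descends to a Teichm\"{u}ller curve $V\to\MM_{g,[n]}$, with $g$ and $n$ determined from $\pi$ by Riemann--Hurwitz, and with base curve $\mathbb{H}/\Gamma$ where $\Gamma\le\SL_2(\Z)$ is the Veech group of $Y$. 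Since $\MM_{1,1}=\mathbb{H}/\SL_2(\Z)$, every such $V$ is a finite cover of the modular curve. The first step is therefore to realize $X_\C$ as such a cover: by Belyi's theorem, $X/\overline{\Q}$ satisfies $X_\C\cong\mathbb{H}/\Gamma_0$ for some finite-index $\Gamma_0\le\PSL_2(\Z)$, equivalently $X_\C$ is a branched cover of the $j$--line ramified only over the two elliptic points and the cusp, with the ramification indices over the elliptic points dividing $2$ and $3$ respectively.

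The heart of the matter is the dictionary between origamis and subgroups of the free group. Writing $F_2=\pi_1(E\setminus\{0\})$, an origami of degree $d$ is the same datum as a conjugacy class of index-$d$ subgroup $U\le F_2$ (equivalently a transitive pair of permutations in $S_d$), and the outer action of $\Out(F_2)\cong\GL_2(\Z)$, restricted to its index-two subgroup $\SL_2(\Z)$, permutes these classes. Under this identification the Veech group of $Y$ is exactly the stabilizer $\Stab_{\SL_2(\Z)}([U])$, and the induced permutation action of $\SL_2(\Z)$ on the orbit of $[U]$ is precisely the monodromy of $V$ over $\MM_{1,1}$. Thus the problem becomes purely group-theoretic: find a finite-index $U\le F_2$ whose stabilizer in $\SL_2(\Z)$ is conjugate to $\Gamma_0$, so that $\mathbb{H}/\Stab_{\SL_2(\Z)}([U])\cong\mathbb{H}/\Gamma_0\cong X_\C$.

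I expect prescribing the Veech group to be the main obstacle. Stabilizers $\Stab_{\SL_2(\Z)}([U])$ are delicate: they jump up whenever the cover $Y\to E$ carries extra translation symmetry, so one cannot simply read $\Gamma_0$ off the combinatorics, and not every finite-index subgroup occurs. To control it I would exploit the abundant freedom available. A fixed $X$ admits infinitely many Belyi maps, hence infinitely many models $\mathbb{H}/\Gamma_0$, and for each target there is an enormous supply of origamis of every degree and ramification profile. The strategy is to choose $U$ rigid enough --- automorphism-free, of characteristic type, or otherwise arranged so that the $\SL_2(\Z)$--set $\{[\phi\cdot U]\}$ is isomorphic as an $\SL_2(\Z)$--set to $\PSL_2(\Z)/\Gamma_0$ --- thereby forcing the stabilizer to be exactly $\Gamma_0$ rather than a larger group. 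Passing to a suitable auxiliary finite cover first should let one absorb the ramification constraints dictated at the order-$2$ and order-$3$ elliptic points of $\MM_{1,1}$.

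Once the base is pinned down, it remains to check that $V=\mathbb{H}/\Gamma_0\to\MM_{g,[n]}$ genuinely satisfies the definition of a Teichm\"{u}ller curve. It is a local isometry for the Kobayashi/Teichm\"{u}ller metric because the $\SL_2(\R)$--orbit is a complex geodesic in moduli space, and it is generically one-to-one provided the chosen $Y$ has no nontrivial translation automorphisms permuting its marked points --- again arrangeable by the rigidifying choice above, if necessary after enlarging the branch data. Since a dominant, generically injective morphism of smooth projective curves is an isomorphism, this exhibits a Teichm\"{u}ller curve $V$ birational to $X_\C$, which is the assertion of Theorem~\ref{th:main}.
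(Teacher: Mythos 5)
Your reduction is the same one the paper performs in its introduction: Belyi's theorem realizes $X_\C$ as $\B{H}_{\B{R}}^2/\Gamma_0$ for a finite index subgroup $\Gamma_0$ of $\SL(2,\Z)$ (the paper arranges $\Gamma_0 \le \Gamma(2)$ so as to work with an unramified cover of $\P^1\smallsetminus\set{0,1,\infty}$ and hence with a free group, sidestepping the elliptic points you would otherwise have to absorb), and the origami dictionary converts the problem into: find a finite index subgroup $U$ of $\pi_{1,1}\cong F_2$ with $\Stab_{\SL(2,\Z)}([U]) = \Gamma_0$. But at exactly this point your proposal stops being a proof. The paragraph beginning ``I expect prescribing the Veech group to be the main obstacle'' correctly identifies the entire mathematical content of the paper (its Theorem~\ref{th:veech}) and then replaces it with a wish: ``choose $U$ rigid enough \dots thereby forcing the stabilizer to be exactly $\Gamma_0$.'' No candidate $U$ is exhibited, and no mechanism is given that forces the stabilizer to \emph{equal} $\Gamma_0$ rather than strictly contain it or fail to contain it. Note also that your suggestion of taking $U$ ``of characteristic type'' points in the wrong direction: a characteristic subgroup has stabilizer all of $\SL(2,\Z)$.

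What is actually needed, and what the paper spends all of Section 2 doing, is a genuine construction. The paper transports $\Gamma_0$ (as a subgroup $\Delta_0$ of $\pi_{0,3}$) across the diagram of covers $S_{1,1} \leftarrow S_{1,4} \rightarrow S_{0,4}$ to obtain a subgroup $\tilde{\Delta}$ of $\pi_{1,4} \le \pi_{1,1}$ whose conjugacy class is at least \emph{preserved} by $\Gamma_0$ (Lemmas~\ref{FixedLemma} and~\ref{ConjugacyActionLemma}); it must then cut $\tilde{\Delta}$ down so that nothing larger preserves the result. This requires (i) a self-normalizing finite index subgroup $\Delta'$ inside $\Delta$ (Proposition~\ref{SelfNormalizer}, proved via surjections onto products of groups $\PSL(2,\F_p)$ extracted from the character variety --- the one non-elementary step of the paper), and (ii) a further pair of abelian covers, with auxiliary primes $p_1,p_2,p_3$ marking three of the punctures and a large prime $\ell$ separating the $\tilde{\Delta}$--classes $[\alpha_j]$ inside the distinguished puncture class, so that any mapping class stabilizing the resulting $[\Delta_{1,\ell}]$ is forced first into $\Gamma(2)$, then into the $\pi_{1,4}$--conjugacy setting, and finally into $\Delta$ itself. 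None of this is routine, and none of it appears in your sketch; ``automorphism-free'' covers alone do not rule out the stabilizer jumping up inside $\SL(2,\Z)$. So the proposal is a correct framing of the problem, but the gap is precisely the theorem.
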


\noindent In fact, this Teichm\"{u}ller curve can be drawn from the rather special class of Teichm\"{u}ller curves parameterizing origami or square-tiled surfaces---see \cite{GJ}, \cite{Lochak}, \cite{MoellerIntro}, and \cite{Schmith} for more on these surfaces and their relationship to billiard dynamics. As the reader will see, the Teichm\"{u}ller curves are however not primitive in the Teichm\"{u}ller sense, as they arise from covering constructions.  \smallskip\smallskip

\noindent We emphasize that the main ideas in the proof of Theorem~\ref{th:main} are drawn from the work of Asada \cite{Asada}, Thurston \cite{Thurston}, and especially Diaz--Donagi--Harbater \cite{Num}.  Part of our motivation is to cast the relevant arguments of \cite{Asada} and \cite{Num}, which are written in the language of algebraic geometry and field extensions, in topological terms.  As this paper was being completed, we encountered the very recent article of Bux--Ershov--Rapinchuk \cite{BER}, which gives a new proof of Asada's theorem in extremely explicit group-theoretic language; the reader will note that the diagram of Riemann surfaces and fundamental groups used in our proof also appears in \S 6 of \cite{BER}.

\smallskip\smallskip

\noindent One could also refine Theorem~\ref{th:main} by asking which algebraic curves over number fields are holomorphically isomorphic (not merely birational) to Teichm\"{u}ller curves.  According to Masur \cite{Masur86} (see also Veech \cite{Veech}) Teichm\"{u}ller curves cannot be compact, and so a proper algebraic curve over a number field can never be isomorphic to a Teichm\"{u}ller curve. The question of which affine algebraic curves are isomorphic over $\C$ to Teichm\"{u}ller curves is a long-standing open problem, which remains unsettled by the present work.
\smallskip \smallskip

\noindent Theorem \ref{th:main} arises as a corollary of the purely group-theoretic Theorem~\ref{th:veech}, which is the main result of the present paper.  In order to state this result, we require some additional notation. Throughout, $\pi_{g,n}$ will denote the fundamental group for the genus $g$ surface $S_{g,n}$ with $n$ punctures, and $\PMod(S_{g,n})$, $\Mod(S_{g,n})$ will denote the associated mapping class group and pure mapping class group of $S_{g,n}$. The pure mapping class group $\Mod(S_{g,n})$ admits an outer action on $\pi_{g,n}$, preserving each of the $n$ conjugacy classes corresponding to the $n$ punctures of $S_{g,n}$.  In fact, by the Dehn--Nielsen theorem, $\Mod(S_{g,n})$ is an index two subgroup of the group of outer automorphisms of $\pi_{g,n}$ preserving these conjugacy classes. \smallskip\smallskip

\noindent If $\Delta$ is a finite index subgroup of $\pi_{1,1}$, we denote the $\pi_{1,1}$--conjugacy class of $\Delta$ by $[\Delta]$.  The mapping class group $\PMod(S_{1,1})$ acts on the conjugacy classes of finite index subgroups of $\pi_{1,1}$, and the stabilizer of $[\Delta]$ in $\PMod(S_{1,1})$ via the outer action is called a \emph{Veech group}~\cite[Theorem 1]{SchmithThesis}. When $\Gamma$ is a Veech subgroup of $\PMod(S_{1,1}) \cong \SL(2,\B{Z})$, the corresponding quotient $\B{H}_{\B{R}}^2/\Gamma$ of the upper half plane has a natural description as a Teichm\"{u}ller curve in $\MM_{g,[n]}$  parameterizing origami or square-tiled surfaces---among Teichm\"{u}ller curves, these are precisely the arithmetic curves. (See \cite[\S 2]{MoellerIntro}.)  Teichm\"{u}ller curves corresponding to non-square-tiled surfaces are substantially more difficult to construct and describe; see the recent work of Bouw--M\"{o}ller \cite{BouwMoeller}, for instance, for a construction of Teichm\"{u}ller curves corresponding to non-arithmetic triangle groups.\smallskip\smallskip

\noindent By Belyi's theorem, every curve $X/\overline{\B{Q}}$ is birational over $\mathbf{C}$ to an \'{e}tale cover of complex projective space minus three points $\P^1\smallsetminus \set{0,1,\infty}$, or, equivalently, to $\B{H}_{\B{R}}^2/\Gamma$ where $\Gamma$ is a finite index subgroup of the level two congruence subgroup $\Gamma(2)$ containing the center $\set{\pm 1}$. Thus, to prove Theorem \ref{th:main}, it suffices to prove the following \emph{congruence subgroup property} for $\textrm{Mod}(S_{1,1})$.\footnote{Theorem \ref{th:veech} has very recently been used by Avila--Matheus--Yoccoz to show the existence of Teichm\"{u}ller curves with complementary series.}

\begin{thm}\label{th:veech}
Every finite index subgroup $\Delta$ of $\Gamma(2)$ containing $\set{\pm 1}$ is a Veech group.
\end{thm}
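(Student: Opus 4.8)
The plan is to translate the statement into a problem about finite covers of the once-punctured torus and then to solve it by combining Asada's congruence subgroup property with an explicit realization construction. Recall that, via the Dehn--Nielsen outer action of $\SL(2,\Z) \cong \PMod(S_{1,1})$ on $\pi_{1,1}$ (a free group of rank two), a Veech group is precisely the stabilizer $\Stab([\Delta])$ of the conjugacy class of a finite-index subgroup $\Delta \le \pi_{1,1}$, equivalently the stabilizer of the isomorphism class of the corresponding connected finite cover $\widetilde{S} \to S_{1,1}$ (a square-tiled surface, once its flat structure is recorded). So the task is to produce, for each finite-index $G \le \Gamma(2)$ with $-I \in G$, a finite-index subgroup $\Delta \le \pi_{1,1}$ whose class is stabilized by exactly $G$. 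The hypothesis $-I \in G$ is both natural and forced here: $-I$ is the central elliptic involution, it acts trivially on $\mathbb{H}$, and as $-\mathrm{id}$ it is an affine automorphism of every translation surface, so it lies in every Veech group.

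First I would reduce to a finite problem using Asada's theorem. One cannot invoke the classical congruence subgroup property, which fails badly for $\SL(2,\Z)$; instead Asada's theorem supplies the geometric congruence subgroup property for $\PMod(S_{1,1})$: every finite-index subgroup $G$ contains $\Gamma_K := \ker\big(\SL(2,\Z) \to \Out(\pi_{1,1}/K)\big)$ for some finite-index characteristic $K \trianglelefteq \pi_{1,1}$. These geometric congruence subgroups, which record the action on possibly nonabelian characteristic quotients, are far finer than the arithmetic $\Gamma(n)$ and do separate all finite-index subgroups. Writing $Q = \pi_{1,1}/K$ and $\Phi = \mathrm{image}\big(\SL(2,\Z) \to \Out(Q)\big)$, the problem becomes purely finite-group-theoretic: realize $\overline{G} := G/\Gamma_K \le \Phi$ as the $\Phi$-stabilizer $\Stab_\Phi([\overline{\Delta}])$ of the conjugacy class of some subgroup $\overline{\Delta} \le Q$. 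Taking for $\Delta$ the preimage of $\overline{\Delta}$ (so that $K \le \Delta$ automatically), one then has $\Stab([\Delta])$ equal to the preimage of $\Stab_\Phi([\overline{\Delta}])$, which is exactly $G$ since $\Gamma_K \le G$.

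The heart of the argument -- and the step I expect to be the main obstacle -- is this exact realization: an arbitrary subgroup $\overline{G} \le \Phi$ need not be the stabilizer of any single subgroup of $Q$, so one cannot merely quote a congruence subgroup and stop. The plan is to break the symmetry by marking. After replacing $K$ by a deeper characteristic subgroup if necessary (legitimate, since $\Gamma_{K'} \le \Gamma_K \le G$), I would construct $\overline{\Delta}$ from a fibre-product cover that records a coset of $\overline{G}$ in $\Phi$, so designed that a mapping class preserves $[\overline{\Delta}]$ precisely when it lies in $\overline{G}$. Arranging that the stabilizer come out \emph{exactly} $\overline{G}$ -- neither larger (which the marking must rule out) nor destroying connectivity of the cover -- is the delicate point. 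At the same time I would build a full level-two structure into the cover, namely an ordered $\F_2$-basis of $H_1(S_{1,1};\Z/2)$, so that any class stabilizing $[\Delta]$ must act trivially on mod-$2$ homology and hence lie in $\Gamma(2)$; this is what pins the realized stabilizer inside $\Gamma(2)$ and uses the hypothesis on $G$.

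Finally, once $\Delta$ with $\Stab([\Delta]) = G$ is in hand, the associated origami defines a square-tiled surface whose $\SL(2,\Z)$-orbit yields the immersed curve $\mathbb{H}/G$ in some $\MM_{g,[n]}$; combined with the Belyi reduction recorded above, this establishes Theorem~\ref{th:main}. I would therefore concentrate the effort on the finite realization lemma, since the passage through Asada's theorem and the level-two rigidification are comparatively formal.
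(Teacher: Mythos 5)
Your reduction via Asada's theorem is sound as far as it goes: every finite-index $\Delta\le\Gamma(2)$ contains a principal congruence subgroup $\Gamma_K=\ker\big(\PMod(S_{1,1})\to\Out(\pi_{1,1}/K)\big)$ for some finite-index characteristic $K\le\pi_{1,1}$, and subgroups of $\pi_{1,1}$ containing $K$ correspond to subgroups of $Q=\pi_{1,1}/K$ compatibly with conjugacy and with the outer actions, so it would indeed suffice to realize $\Delta/\Gamma_K$ as the \emph{exact} stabilizer of a subgroup class in the finite group $Q$ (after deepening $K$ if necessary). But the proposal stops precisely where the theorem begins. The ``finite realization lemma'' you defer is not a formality; it is the entire mathematical content of the result, and ``construct $\overline{\Delta}$ from a fibre-product cover that records a coset'' and ``break the symmetry by marking'' name the difficulty without resolving it. In particular you offer no mechanism for the hard direction, namely excluding that the stabilizer of your marked cover is strictly \emph{larger} than the target subgroup.

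To see what is actually required, compare with the paper, which does not route through Asada at all (the theorem is a strict refinement of Asada's theorem, proved directly by transporting $\Delta$ to a subgroup $\Delta_0$ of $\pi_{0,3}\cong\Mod(S_{0,4})$ through the characteristic subgroup $\pi_{1,4}\le\pi_{1,1}$, Lemma~\ref{EquivariantLemma}). The symmetry breaking there needs two ingredients your sketch has no counterpart for. First, one must shrink $\Delta$ to a finite-index subgroup $\Delta'$ that is \emph{self-normalizing} in $\Gamma(2)$ (Proposition~\ref{SelfNormalizer}) --- the one non-elementary step of the paper, proved by finding an $\F_p$-point of the representation variety of $\Delta$ avoiding suitable closed subvarieties, so as to obtain surjections onto copies of $\PSL(2,\F_p)$ indexed by $\Gamma(2)/\Delta$ that are permuted by the outer action, and then pulling back a Borel subgroup in one factor. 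Without this, the element $\eta$ conjugating $\tilde{\theta}(\Delta_{1,\ell})$ back to $\Delta_{1,\ell}$ cannot be forced into $\tilde{\Delta'}$, and the stabilizer computation fails. Second, the marking is a concrete homological construction: three distinct primes $p_1,p_2,p_3$ attached to three of the four puncture classes of $S_{1,4}$ pin down each puncture class individually (this, rather than your mod-$2$ basis, is what forces a stabilizing mapping class into $\Gamma(2)$), and a further mod-$\ell$ condition on $H^1(\tilde{\Delta'},\Z/\ell\Z)$, with $\ell$ prime to all indices $[\tilde{\Delta'}:\eta^{-1}\tilde{\Delta'}\eta\cap\tilde{\Delta'}]$, separates the $k$ classes $[\alpha_j]_{\tilde{\Delta}}$ on which $\Gamma(2)/\Delta$ acts as on its coset space (Lemma~\ref{ConjugacyActionLemma}). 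Until you supply analogues of both, what you have is a plausible reduction followed by an unproven claim.
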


\noindent The classification problem for Veech groups goes back at least to Thurston (see the Kirby problem list for more on the history of this problem; see also problem 4 of the survey by Hubert, Masur, Schmidt, and Zorich \cite{hubert:problems}). Theorem \ref{th:veech} can be seen as progress on this problem. The special case of subgroups of $\SL(2,\Z)$ is raised in the introduction of Schmith\"{u}sen's thesis \cite{SchmithThesis}. Theorem \ref{th:main} also strengthens a theorem of M\"{o}ller \cite[Theorem 5.4]{MoellerIntro}, which shows that $\GalQ$ acts faithfully on the set of isomorphism classes of Teichm\"{u}ller curves.  A recent paper of Hubert--Lelievre \cite{HubertLelievre} provides a close study of Veech subgroups of $\SL(2,\Z)$ corresponding to Teichm\"{u}ller curves in $\MM_2$; they show that among these Veech groups are some which are not congruence subgroups (in the classical sense).\smallskip \smallskip

\noindent  There is some danger that the purely group-theoretic set-up used in the present paper may obscure the geometric picture that underlies the argument.  We thus sketch that picture here.  Suppose $X$ is a Belyi cover of $\P^1$, unbranched away from $\set{0,1,\infty}$, and we wish to realize $X$ as a moduli space of singly branched covers of elliptic curves (or, equivalently, a moduli space of square-tiled surfaces.)  First, we choose some other $\lambda$ on $\P^1\smallsetminus \set{0,1,\infty}$ and a suitable diagram of algebraic curves, $Y \ra X \ra \P^1$, where $Y \ra X$ is ramified only over a {\em single} preimage of $\lambda$ in $X$.  Then the composite map $\pi\colon Y \ra \P^1$ is also ramified only at $\set{0,1,\infty,\lambda}$.  Let $H$ be the Hurwitz space parameterizing branched covers of $\P^1$ of the same topological type as $\pi$.  A point $h$ in $H(\C)$ can then be thought of as both remembering the location of $\lambda$ and a choice of a preimage of $\lambda$ in $X$; or, what is the same, a point on $X$ away from the preimage of $\set{0,1,\infty}$. It is rather delicate to show that $Y$ can be chosen so as to make this literally the case.  That this program succeeds in realizing $H$ as a moduli space of four-branched covers of $\P^1$ is proved in Diaz--Donagi--Harbater \cite{Num}.  In order to make $H$ a space of singly-branched covers of elliptic curves we let $E_\lambda$ be the double cover of $U = \P^1 - \set{0,1,\infty,\lambda}$ branched at all four punctures, and let $Z$ be the fiber product $Y \times_U E_\lambda$.  Then the composite $\pi'\colon  Z \ra E_\lambda \ra E_\lambda$, where the second map is multiplication by $2$, is unbranched away from a single point.  The Hurwitz space parameterizing covers of the same topological type as $\pi'$ is then closely related to $H$, and thus, in turn, to $X$.  Most of the work in the present paper can be thought of as showing that the many choices in the above construction can be made sufficiently generic in order to ensure that the resulting moduli space of covers of elliptic curves is actually birational to $X$.\smallskip \smallskip

\noindent One might ask whether every algebraic curve $X/\bar{\Q}$ is birational to a Teichm\"{u}ller curve in $\MM_g$, as opposed to $\MM_{g,[n]}$.  This statement does not quite follow from Theorem \ref{th:veech}.  We construct a Teichm\"{u}ller curve $C \inj \MM_{g,[n]}$ for some $g,n$, with $C$ birational to $X_\C$.  A priori, the composition $C \inj \MM_{g,[n]} \ra \MM_g$ might not be generically one-to-one, in which case the image of $C$ in $\MM_g$, not $C$ itself, would be a Teichm\"{u}ller curve in $\MM_g$.  This kind of behavior can indeed occur for general origami curves---see the example after Definition 2.4 in \cite{MoellerIntro}.  In Proposition \ref{pr:mg} at the end of the paper, we explain how $C$ can be birationally embedded as a Teichm\"{u}ller curve in $\MM_g$, for some sufficiently large $g$.\smallskip\smallskip

\noindent Theorem \ref{th:veech} is also related to the congruence subgroup problem for mapping class groups. There is some variation among authors in the definition of congruence subgroups of mapping class groups.  We will use \emph{Veech group} for the class of subgroups of $\PMod(S_{1,1})$ described here, and reserve \emph{principal congruence subgroup} for the class of subgroups $\Lambda$ of $\PMod(S_{1,1})$ arising as kernels of induced maps
\[ \rho_\Phi\colon \PMod(S_{1,1}) \longrightarrow \Out(\pi_{1,1}/\Phi), \]
where $\Phi$ is a finite index, characteristic subgroup of $\pi_{1,1}$.  In the most common terminology, a \emph{congruence subgroup} is a subgroup of $\PMod(S_{1,1})$ containing a principal congruence subgroup.  We see that a Veech group is a congruence subgroup by taking $\Phi$ to be the intersection of all subgroups of $\pi_{1,1}$ of index $[\pi_{1,1}:\Phi_\Delta]$, where
\[ \Stab_{\PMod(S_{1,1})}([\Phi_\Delta]) = \Delta. \]
Thus, Theorem~\ref{th:veech} can be thought of as a refinement of the congruence subgroup property for $\PMod(S_{1,1})$, a theorem first established by Asada \cite{Asada} (see also \cite{Boggi2}).

\paragraph{Acknowledgements}
The authors thank Matthew Bainbridge, Benson Farb, Ursula Hamenst\"{a}dt, David Harbater, Chris Judge, Chris Leininger, Pierre Lochak, Larsen Louder, Howard Masur, Curt McMullen, Martin M\"{o}ller, Gabriella Schmith\"{u}sen, and Alex Wright for conversations on this article and its content.  We add special thanks to the anonymous referees, whose suggestions engendered substantial simplifications in the original argument.

\paragraph{Notation}

For the readers' convenience, we establish some notation that will be used throughout the remainder of this article. In a group $G$, the subgroup generated by the elements $\set{g_1,\dots,g_n}$ will be denoted by $\innp{g_1,\dots,g_n}$. The $G$--conjugacy class of $g$ will by denoted by $[g]_G$. The normal closure of a subgroup $H$ of $G$ will be denoted by $\overline{H}$. The subgroup generated by a pair of subgroups $H_1,H_2$ will be denoted by $H_1\cdot H_2$. We denote the normalizer of $H$ in $G$ by $\Nor_G(H)$. The $G$--conjugacy class of a subgroup $H$, which consists of subgroups $H'$ that are $G$--conjugate to $H$, will be denoted by $[H]_G$.\smallskip\smallskip

\noindent For a genus $g$ surface with $n$ punctures $S_{g,n}$, we denote the fundamental group of $S_{g,n}$ by $\pi_{g,n}$, and we refer to the conjugacy classes of loops around punctures as "puncture classes." The mapping class group of $S_{g,n}$ comprised of orientation preserving diffeomorphisms modulo isotopy will be denoted by $\textrm{Mod}(S_{g,n})$. The pure mapping class subgroup comprised of mapping classes fixing each of the punctures will be denoted by $\Mod(S_{g,n})$.

\section{Preliminaries}

\noindent Before commencing the proof of Theorem \ref{th:veech}, we require some additional setup. To begin, we have a homomorphism $\pi_{0,4} \to \pi_{0,3}$ given by forgetting a puncture on $S_{0,4}$. We denote the distinguished conjugacy class given by the simple closed curve about the forgotten puncture by $[z]_{\pi_{0,4}}$. The normal closure $\overline{\innp{z}}$ of $\innp{z}$ is the kernel of this homomorphism.\smallskip\smallskip

\noindent We can think of $S_{1,4}$ concretely as the quotient of $\R^2 - \Z^2$ by $2\Z^2$.  Then multiplication by $-1$ is an isomorphism of $S_{1,4}$, and we take the quotient as our model of $S_{0,4}$.  We take $(\R^2 - \Z^2)/\Z^2$ as our model for $S_{1,1}$.  The natural maps $S_{0,4} \leftarrow S_{1,4} \rightarrow S_{1,1}$ now yield a diagram of fundamental groups

\begin{equation}\label{Diagram1}
\xymatrix{ \pi_{1,1} \ar@{-}[rd]_{4} & & \ar@{-}[ld]^2 \pi_{0,4} \\ & \pi_{1,4} & }
\end{equation}
where both diagonal lines are inclusions of finite-index normal subgroups.   \smallskip\smallskip

\noindent Every mapping class in $\PMod(S_{1,1})$ is represented by a linear automorphism of the torus $\R^2 / \Z^2$, which is just to say that $\PMod(S_{1,1})$ is naturally identified with $\SL(2,\Z)$ (see \cite{FM}).  Since $-1$ is central in $\SL(2,\Z)$, each of these automorphisms descends to the quotient of $S_{1,1}$ by $\set{\pm 1}$, yielding a homomorphism
\[  i\colon \PMod(S_{1,1}) \ra \PMod(S_{0,4}) \]
It follows from Birman--Hilden \cite[Theorem 5]{birm:bihi} that this map is surjective and has kernel $\set{\pm 1}$.  The \emph{point push map} induces an isomorphism (see for instance \cite[Theorem 4.5]{FM})
\[ \Push\colon \pi_{0,3} \longrightarrow \PMod(S_{0,4}). \]
Moreover, $\SL(2,\Z)$ acts on our chosen model of $S_{1,4}$ by linear automorphisms, and this action is compatible with the inclusions in (\ref{Diagram1}).  This gives a map $\PMod(S_{1,1}) \ra \PMod(S_{1,4})$. Let $\Gamma(2)$ denote the principal congruence subgroup of level two in $\SL(2,\Z)$, $\Gamma$ be the image of $\Gamma(2)$ in $\PMod(S_{1,4})$, and write $\tau \in \Gamma$ for the class of multiplication by $-1$.

\begin{lem}\label{EquivariantLemma}
There is a surjection $\Gamma(2) \to \pi_{0,3}$ with kernel $\set{\pm 1}$ such that the diagram
\begin{equation}\label{EquivariantDiagram}
\xymatrix{ \Gamma(2) \ar[r] \ar[d] & \pi_{0,3} \ar[d] \\ \Gamma \ar[r] & \Gamma/\innp{\tau}}
\end{equation}
commutes, with the vertical arrows isomorphisms.
\end{lem}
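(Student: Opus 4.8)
The plan is to realize every group in diagram~\eqref{Diagram} geometrically through a single elliptic curve, so that the two \emph{a priori} different restriction recipes landing in $\Out(\pi_{1,4})$ become two readings of one diffeomorphism. Fix an elliptic curve $E/\C$ and identify $S_{1,4}$ with $E \smallsetminus E[2]$, the torus punctured at its four $2$--torsion points. Then $F_1$ is the multiplication-by-two isogeny $[2]\colon S_{1,4} \to S_{1,1} = E \smallsetminus \set{O}$, a regular cover with deck group $E[2] \cong V_4$ acting by translation; and $F_0$ is the quotient by the elliptic involution $\iota\colon P \mapsto -P$, whose fixed locus is exactly $E[2]$, so that $\iota$ acts freely on $S_{1,4}$ with quotient $S_{0,4} = (E \smallsetminus E[2])/\iota$ and deck group $\innp{\iota} = \innp{\tau}$. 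First I would check that these two covers agree with the homological covers defined in the text (each loop about a puncture of $S_{0,4}$ lifts to a path swapping the two sheets, and $[2]$ realizes the full $2$--torsion cover of $S_{1,1}$), so that the algebraically defined $\pi_{1,4}$ is the topological one.

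The engine of the proof is that $\SL(2,\Z) \cong \PMod(S_{1,1})$ acts on $E$ by linear automorphisms fixing $O$, and any such linear map commutes with both $[2]$ and $\iota$. Hence a single $A \in \Gamma(2)$ induces simultaneously: a diffeomorphism of $S_{1,4}$ (as $A$ preserves $E[2]$), which is \emph{pure} because $A \equiv I \pmod 2$ fixes each $2$--torsion point, giving the map $\Gamma(2) \to \Mod(S_{1,4})$; and, since $A$ commutes with $\iota$, a descended diffeomorphism $\bar A$ of $S_{0,4}$, which fixes each of the four branch points and so represents an element of $\Mod(S_{0,4}) = \Push(\pi_{0,3})$. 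Because $F_1 \circ A = A \circ F_1$ and $F_0 \circ A = \bar A \circ F_0$, the outer automorphism $A_\star|_{\pi_{1,4}} \in \Out(\pi_{1,4})$ is at once the image of $A$ under $\Gamma(2) \to \Mod(S_{1,4})$ and a lift along $F_0$ of $\Push^{-1}(\bar A) \in \pi_{0,3}$. The first assertion of the lemma is then immediate: $A_\star|_{\pi_{1,4}}$ lies in $\Gamma$ by the very definition of $\Gamma$ as the group of such lifts.

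I would then define the surjection $\Gamma(2) \to \pi_{0,3}$ to be $A \mapsto \Push^{-1}(\bar A)$, namely descent to $S_{0,4}$ followed by $\Push^{-1}$. Its kernel is $\set{\pm 1}$: the central $-I$ is precisely $\iota$, which acts trivially on the quotient $S_{0,4}$, while the induced map $\bar\Gamma(2) = \Gamma(2)/\set{\pm 1} \to \Mod(S_{0,4})$ is an isomorphism, both groups being identified with $\pi_{0,3} = \pi_1$ of the thrice-punctured sphere $Y(2)$. Commutativity of~\eqref{EquivariantDiagram} then reduces to the identity already observed: sending $A$ down the left edge into $\Gamma$ and then modding out by $\innp{\tau}$ yields the class of the lift $A_\star|_{\pi_{1,4}}$ modulo $\tau$, which under $\pi_{0,3} \cong \Gamma/\innp{\tau}$ is exactly $\Push^{-1}(\bar A)$, the image of $A$ along the top edge.

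The main obstacle is bookkeeping the single involution that wears three hats: the central generator $-I$ of $\Gamma(2) \subset \SL(2,\Z)$, the elliptic involution $\iota$ generating the deck group of $F_0$, and the outer automorphism $\tau$ of $\pi_{1,4}$ it induces. It is this triple identification that forces the kernel of $\Gamma(2) \to \pi_{0,3}$ to be exactly $\set{\pm 1}$ and makes the diagram commute only after passing to $\Gamma/\innp{\tau}$. One must also handle the lifting ambiguities honestly---$V_4$ on the $\PMod(S_{1,1})$ side and $\innp{\tau}$ on the $\Mod(S_{0,4})$ side---verifying that the canonical puncture-preserving lift on $\Gamma(2)$ is the one realized by the linear action, and that the two restriction recipes of the text compute the same outer automorphism $A_\star|_{\pi_{1,4}}$. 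A purely algebraic alternative, tracking explicit automorphisms of the free groups $\pi_{1,1} = \innp{a,b}$ and $\pi_{0,4} = \innp{x,y,z}$ on the common generating set of $\pi_{1,4}$, is available but more laborious and less transparent.
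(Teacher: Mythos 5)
Your proof is correct, and it reaches the lemma by a genuinely different route from the paper's. The paper argues algebraically: it embeds $\pi_{1,1}$, $\pi_{0,4}$, and $\pi_{1,4}$ as the preimages of explicit subgroups under a homomorphism $\vp\colon\pi_{0,1;2^3}\to(\Z/2\Z)^3$ from the orbifold group of the sphere with one puncture and three cone points of order two, verifies by a direct conjugation computation that $\pi_{0,1;2^3}$ is normal in $\Aut(\pi_{1,1})$ with image $\set{\pm 1}$ in $\Out(\pi_{1,1})$, and deduces the key claim --- that the image of $\Gamma(2)$ lies in $\Gamma$ --- from the fact that every automorphism of $\pi_{1,1}$ therefore extends to $\pi_{0,1;2^3}$ and restricts to $\pi_{1,4}$ through $\Aut(\pi_{0,4})$; the surjection $\Gamma(2)\to\pi_{0,3}$ with kernel $\set{\pm 1}$ is then read off from the two actions on $H=H_1(S_{1,4},\Z)$ modulo the puncture classes, where the generators visibly act as the standard generators of $\B{P}\Gamma(2)$. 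Your flat-torus model is the geometric incarnation of exactly this structure: $\pi_{0,1;2^3}$ is the orbifold fundamental group of $(E/\iota)\smallsetminus\set{\overline{O}}$, and the paper's computation that the extra $\Z/2\Z$ acts as $\pm 1$ is your observation that linear maps commute with $\iota=-I$. What your version buys is that the key claim collapses to the one-line identities $A\circ\iota=\iota\circ A$ and $A\circ[2]=[2]\circ A$, with no auxiliary group or normality check; what it costs is the burden of verifying that the topological covers agree with the homological ones in the text and that the linear action realizes the canonical puncture-preserving lift --- checks you correctly flag and which are routine. The one place you are too quick is the assertion that $\Gamma(2)/\set{\pm 1}\to\Mod(S_{0,4})$ is an isomorphism: saying that both groups are identified with $\pi_1$ of $Y(2)$ only shows they are abstractly free of rank two, not that your particular homomorphism $A\mapsto\Push^{-1}(\bar A)$ is bijective. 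You need either to lift isotopies in the style of Birman--Hilden (if $\bar A$ is isotopic to the identity rel the four branch points, then $A$ is isotopic to a deck transformation of the branched cover, hence $A=\pm I$), or to do what the paper does on $H$: check that the images of the standard generators of $\Gamma(2)$ generate $\Mod(S_{0,4})\cong\pi_{0,3}$, giving surjectivity, with injectivity on the quotient then following since finitely generated free groups are Hopfian. This is a standard fact and easily supplied, so it is a presentational gap rather than a mathematical one.
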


\begin{proof}  This follows immediately from the description above.  The Birman--Hilden map $i$ restricts to a surjection
\[  i\colon \Gamma(2) \ra \Mod(S_{0,4}) \]
with kernel $\set{\pm 1}$; composing with the inverse of the point-pushing isomorphism yields the upper horizontal map in the diagram.  The left-hand vertical map comes from the action of $\SL(2,\Z)$ on our model of $S_{1,4}$, and the right-hand vertical map from the fact that $-1 \in \Gamma(2)$ maps to $\tau \in \Gamma$.
\end{proof}

\noindent We briefly explain how Lemma \ref{EquivariantLemma} can be expressed in the language of moduli spaces (better, moduli stacks) of algebraic curves.   Write $\MM$ for the moduli space of data $(C,P_1,P_2,P_3,P_4)$, where $C$ is a smooth complex genus $1$ curve and the $P_i$ are distinct points on $C$ such that $P_i - P_j$ is a $2$--torsion divisor on the Jacobian of $C$.  There is a natural isomorphism from $\MM$ to $X(2)$, the moduli stack of elliptic curves with level two structure, which sends $(C,P_1,P_2,P_3,P_4)$ to the elliptic curve $(C,P_1)$ with $P_2 - P_1$ and $P_3 - P_1$ as a basis of $2$--torsion.  On the other hand, given $(C,P_1,P_2,P_3,P_4)$ there is a unique nontrivial involution $\iota$ of $C$ fixing all the $P_i$, which affords a morphism
\[ \phi\colon C \longrightarrow C/\iota \]
whose target is a genus $0$ curve.  This morphism induces a morphism $\MM \ra \MM_{0,4}$ defined by
\[ (C,P_1,P_2,P_3,P_4) \mapsto (\phi(C),\phi(P_1),\phi(P_2),\phi(P_3),\phi(P_4)). \]
This map is an isomorphism on coarse moduli spaces, but the target is generically a scheme, while $\MM$ has a generic inertia group of $\Z/2\Z$. The (analytic) fundamental group of $\MM_{0,4}$ is precisely $\Mod(S_{0,4}) \cong \pi_{0,3}$.
Thus, the diagram
\[ \pi_1(X(2)) \widetilde{\longleftarrow} \pi_1(\MM) \longrightarrow \pi_1(\MM_{0,4}) \]
can be written as
\[ \Gamma(2) \widetilde{\longleftarrow} \pi_1(\MM) \longrightarrow \pi_{0,3}. \]
The group $\Gamma$ appearing in the proof of Lemma \ref{EquivariantLemma} is just $\pi_1(\MM)$, and the inclusion of $\Gamma$ in $\Mod(S_{1,4})$ is induced by the inclusion of $\MM$ as the hyperelliptic locus in $\MM_{1,4}$. \smallskip \smallskip


\section{Proof of main theorem}

\noindent We begin by explaining how the problem at hand, which involves finite-index subgroups of $\pi_{1,1}$, can be translated to a problem about finite-index subgroups of $\pi_{0,4}$.

\begin{prop}  Let $H$ be a finite-index subgroup of $\pi_{0,4}$ satisfying the following properties:
\begin{itemize}
\item[(a)] Let $\gamma$ be an element of $\Gamma(2) / \set{\pm 1}$, considered as an outer automorphism of $\pi_{0,4}$, and $\alpha$ an automorphism of $\pi_{0,4}$ lying over $\gamma$.  If $\alpha(H)$ is not conjugate to $H$, then $[H:H \cap \alpha(H)] > 2$.
\item[(b)] $H$ is not contained in $\pi_{1,4}$.
\item[(c)] Write $H_0$ for the intersection $H \cap \pi_{1,4}$. The permutations on the set $\pi_{0,4} / H$ induced by the four puncture classes represent four distinct conjugacy classes in the symmetric group on $N := [\pi_{0,4}:H] = [\pi_{1,4}:H_0]$ letters.
\end{itemize}
Then
\[ \Stab_{\PMod(S_{1,1})}([H_0]_{\pi_{1,1}}) = i^{-1}(\Stab_{\Mod(S_{0,4})}([H]_{\pi_{0,4}})) \]
where
\[  i\colon \PMod(S_{1,1}) \ra \PMod(S_{0,4}) \]
is the surjection defined in the previous section.
\label{pr:04to11}
\end{prop}

\begin{proof}
For $\gamma \in \PMod(S_{1,1})$, our aim is to show that $\gamma \in \Stab_{\PMod(S_{1,1})}([H_0]_{\pi_{1,1}})$ if and only if $i(\gamma) \in \Stab_{\Mod(S_{0,4})}([H]_{\pi_{0,4}})$. We begin by showing the inclusion
\[  i(\Stab_{\PMod(S_{1,1})}([H_0]_{\pi_{1,1}}))  \subset (\Stab_{\Mod(S_{0,4})} [H]_{\pi_{0,4}}). \]
Suppose $\gamma$ preserves the conjugacy class of $H_0$ in $\pi_{1,1}$.  Then we can choose an automorphism $\beta$ of $\pi_{1,1}$ lying over $\gamma$ such that $\beta(H_0) = H_0$.  Alternately, the homomorphism $\pi_{1,4} \ra \textrm{Sym}(N)$ induced by the action on cosets of $H_0$ is unchanged by composition on the left with $\beta$.  By hypothesis, the four puncture classes in $\pi_{1,4}$, which are naturally identified with the four puncture classes in $\pi_{0,4}$, map to distinct conjugacy classes in $\textrm{Sym}(N)$.  Thus, $\beta$ must preserve each of these four puncture classes.  Recall that the action of $\PMod(S_{1,1})$ on $S_{1,4}$  is that of $\SL(2,\Z)$ on $(\R^2 - \Z^2) / 2\Z^2$, and the four puncture classes in $\pi_{1,4}$ are naturally identified with $(\Z/2\Z)^2$.  In particular, the fact that $\beta$ preserves all four puncture classes implies that $\gamma$ lies in $\Gamma(2)$.\smallskip\smallskip

\nid Now let $\alpha$ be an automorphism of $\pi_{0,4}$ lying over $i(\gamma)$. The restrictions of $\alpha$ and $\beta$ to $\pi_{1,4}$ may differ, but by composing $\alpha$ with an inner automorphism of $\pi_{0,4}$ and $\beta$ with an inner automorphism of $\pi_{1,1}$ we can arrange that $\alpha$ and $\beta$ act identically on $\pi_{1,4}$. Now $\alpha(H) \cap H$ contains $H_0$, and thus has index at most $2$ in $H$, which by hypothesis implies that $\alpha(H)$ and $H$ are conjugate in $\pi_{0,4}$.  Thus $\alpha$ lies in $\Mod(S_{0,4})$ and preserves $[H]_{\pi_{0,4}}$, which was the assertion to be shown.\smallskip\smallskip

\nid What remains is to prove that if $\delta \in \Stab_{\Mod(S_{0,4})}([H]_{\pi_{0,4}})$, then any lift of $\delta$ to $\PMod(S_{1,1})$ preserves the conjugacy class of $H_0$ in $\pi_{1,1}$. Let $\alpha$ be an automorphism of $\pi_{0,4}$ lying over $\delta$.  Then $\alpha(H_0)$ and $H_0$ are conjugate by some element $\eta$ of $\pi_{0,4}$. We claim that $\alpha(H_0) \in [H_0]_{\pi_{1,4}}$. If $\eta \in \pi_{1,4}$, then $\alpha(H_0) \in [H_0]_{\pi_{1,4}}$ as claimed. Thus, we may assume that $\eta$ is not in $\pi_{1,4}$. By assumption, $H$ is not contained in $\pi_{1,4}$ and so contains an element $\lambda$ in $\pi_{0,4}$ but not $\pi_{1,4}$. Since $H_0$ is normal in $H$, we have
\[ \eta \lambda H_0 \lambda^{-1} \eta^{-1} = \eta H_0 \eta^{-1} = \alpha(H_0) \]
However $\eta \lambda$ lies in $\pi_{1,4}$ and so $\alpha(H_0)$ and $H_0$ are in fact conjugate in $\pi_{1,4}$. Now let $\tilde{\delta}$ be a lift of $\delta$ to $\PMod(S_{1,1})$.  By the compatibility in Lemma~\ref{EquivariantLemma}, we can choose an automorphism $\beta$ of $\pi_{1,1}$ in the outer class $\tilde{\delta}$, whose action on $\pi_{1,4}$ differs from that of $\alpha$ by an inner automorphism of $\pi_{0,4}$.  In particular, we have that $\beta(H_0)$ is conjugate in $\pi_{1,4}$ to either $H_0$ or $\tau(H_0)$; but these two subgroups are themselves conjugate in $\pi_{1,4}$, as shown in the paragraph above.  Therefore, $\beta(H_0)$ and $H_0$ are conjugate in $\pi_{1,4}$, whence a fortiori in $\pi_{1,1}$. This completes the proof of the proposition.
\end{proof}

\nid We now explain how to construct a subgroup $H$ of $\pi_{0,4}$ satisfying the hypotheses of Proposition~\ref{pr:04to11}.  Let $\Delta$ be a finite-index subgroup of $\Gamma(2)$ containing $\set{\pm 1}$.  Under the identifications in Lemma~\ref{EquivariantLemma}, we can identify $\Delta / \set{\pm 1}$ with a finite-index subgroup of $\pi_{0,3}$.  We denote by $G(\Delta)$ the preimage of $\Delta / \set{\pm 1}$ under the projection $\pi_{0,4} \ra \pi_{0,3}$.  If $z$ is a representative of the puncture class in $\pi_{0,4}$ that vanishes in $\pi_{0,3}$, then $G(\Delta)$ contains $z$ and all its $\pi_{0,4}$--conjugates.  The $\pi_{0,4}$ conjugacy class of $z$ splits up into a finite union of conjugacy classes in $G(\Delta)$; we choose representatives $z_1, \ldots, z_k$ for these classes, and denote the class of $z_i$ by $[z_i]$.

\begin{lem}\label{ActionOnG}
The action of $\Gamma(2)$ on $\pi_{0,4}$ preserves the conjugacy class of $G(\Delta)$.  The permutation action of $\Gamma(2)$ on the conjugacy classes $[z]_i$ is equivalent to its action on the cosets of $\Delta$.
\end{lem}

\begin{proof}
Recall that $\Gamma(2)$ acts on $\pi_{0,4}$ via point-pushing maps; by the Birman exact sequence these act by conjugation on $\pi_{0,3}$.  This proves that $\Gamma(2)$ preserves $G(\Delta)$ up to conjugacy. Now let $F\colon  S \to S_{0,3}$ be the covering space associated to $\Delta/\set{\pm 1} \subset \pi_{0,3}$, and $p$ a point of $S_{0,3}$. The set $F^{-1}(p)$ has order $[\Gamma(2):\Delta]$ and the action of $\pi_{0,3}$ on $F^{-1}(p)$ is isomorphic to the action of $\pi_{0,3}$ on cosets of $\Delta$.  Viewing $S_{0,4}$ as $S_{0,3} \smallsetminus \set{p}$, the $G$--conjugacy classes $[z_i]$ contained in $[z]_{\pi_{0,4}}$ are identified with $F^{-1}(p)$.
\end{proof}

\begin{lem}\label{NormalizerDelta}
Let $\Delta$ be a finite-index subgroup of $\Gamma(2)$ containing $\set{\pm 1}$.  Then there exists a normal subgroup $\Delta' \subset \Delta$ of finite index, containing $\set{\pm 1}$, whose normalizer in $\Gamma(2)$ is $\Delta$.
\end{lem}

\begin{proof}
To begin, note that $\Gamma(2)/\set{\pm 1}$ is a free group of rank 2.  For notational simplicity we denote $\Delta/\set{\pm 1}$ by $\bar{\Delta}$ for the remainder of the proof.  It suffices to construct a normal subgroup of $\bar{\Delta}$ whose normalizer is $\bar{\Delta}$, since the preimage of such a subgroup in $\Delta$ will yield the subgroup promised by the lemma. Let $\gamma_1,\dots,\gamma_k$ be distinct coset representatives for $\bar{\Delta}$ in $\Gamma(2)/\set{\pm 1}$. Select a prime $q$ which is relatively prime to
\[ [\bar{\Delta}: \bar{\Delta} \cap \gamma_j(\bar{\Delta})\gamma_j^{-1}] \]
for all $j$.\smallskip\smallskip

\noindent Now let $c\colon \bar{\Delta} \ra \Z/q\Z$ be a surjective homomorphism, whose kernel we denote $\Delta_c$.  If $\gamma_j$ normalizes $\Delta_c$, we claim it must also normalize $\bar{\Delta}$.  Suppose on the contrary that $\gamma_j$ normalizes $\Delta_c$ but not $\bar{\Delta}$. It must then be that
\[ \Delta_c < \bar{\Delta} \cap \gamma_j\bar{\Delta}\gamma_j^{-1} < \bar{\Delta}. \]
But $[\bar{\Delta}:\Delta_c] = q$, and so $[\bar{\Delta}:\bar{\Delta} \cap \gamma_j \bar{\Delta} \gamma_j^{-1}]$ is either $1$ or $q$.   The latter is impossible by our choice of $q$, so $\gamma_j$ normalizes $\bar{\Delta}$ as claimed.\smallskip\smallskip

\noindent Let $\gamma_{j_1},\dots,\gamma_{j_r}$ be the subset of coset representatives that normalize $\bar{\Delta}$ and note these elements can be viewed as elements in the finite group $\textrm{N}_{\Gamma(2)/\set{\pm 1}}(\bar{\Delta})/(\bar{\Delta}) = \Theta$.  We impose the additional condition on $q$ that it not divide $\abs{\Theta}$.\smallskip\smallskip

\noindent We note first that $\Theta$ acts faithfully on $H^1(\bar{\Delta},\Z)$.  The faithfulness is proved as follows.  First, for any subgroup $\Theta_0<\Theta$, the stabilizer of $\Theta_0$ in $H^1(\bar{\Delta},\Z)$ is precisely $H^1(\innp{\bar{\Delta},\Theta_0},\Z)$, where $\innp{\bar{\Delta},\Theta_0}$ is the group generated by $\bar{\Delta}$ and the elements $\set{\gamma_j}$ corresponding to the elements in $\Theta_0$. Second, the dimension of $H^1(\Lambda,\Z)$ for any $\Lambda<\Gamma(2)/\set{\pm 1}$ is the rank of the free group $\Lambda$. Since the rank of $\bar{\Delta}$ is strictly greater than the rank of $\innp{\bar{\Delta},\Theta_0}$ for all non-trivial $\Theta_0<\Theta$, we see that the subspace of $H^1(\bar{\Delta},\Z)$ fixed by any non-trivial subgroup $\Theta_0$ has smaller rank. In particular, removing all of these subspaces from $H^1(\bar{\Delta},\Z)$, we are left with an infinite subset of cohomology classes whose stabilizer in the $\Theta$-action is trivial.  Choosing such a class $v$, it is plain that the image of $v$ in $H^1(\bar{\Delta},\F_q)$ also has trivial stabilizer, for almost all primes $q$.  Choose such a $q$, and let $c$ be the image of $v$ in $H^1(\bar{\Delta},\F_q)$.  Then $c$ is a homomorphism from $\bar{\Delta}$ to $\Z/q\Z$.\smallskip\smallskip

\noindent Suppose $\gamma_{i}$ normalizes $\Delta_c$.  Then by the first part of the argument, $\gamma_i$ must normalize $\bar{\Delta}$, and so in particular it must lie in $\Theta$.  But by our choice of $c$, an element of $\Theta$ which preserves $\Delta_c$ must be trivial.  We conclude that no element of $\Gamma(2)/\pm 1$ outside $\bar{\Delta}$ normalizes $\Delta_c$.\smallskip\smallskip

\noindent Finally, the desired subgroup $\Delta'$ can be taken to be the pullback of $\Delta_c$ to $\Gamma(2)$; by construction, $\Delta'$ contains $\set{\pm 1}$, is normal in $\Delta$, and has $\Delta$ as its normalizer in $\Gamma(2)$.
\end{proof}

\nid Let $\Delta'$ be a normal subgroup of $\Delta$ as provided by Lemma~\ref{NormalizerDelta}, and write $G$ for $G(\Delta')$.  Now $G$ is the fundamental group of a punctured surface, and in particular is free.  For each prime $p$, let
\[ \phi_p\colon  G \ra H^1(G,\B{F}_p) \]
be the canonical epimorphism. Let $\ell,p_1,p_2,p_3$ be four distinct large odd primes. To be precise, let $\mathcal{P}_{bad}$ is the set of primes dividing either the index of $G$ in $\pi_{0,4}$, or some index
\[  [G: G \cap \nu G \nu^{-1}] \]
as  $\nu G \nu^{-1}$ ranges over the finite set of conjugates of $G$ in $\pi_{0,4}$.  We want $p_1, p_2, p_3, \ell$ to be larger than any element of $\mathcal{P}_{bad}$ and will mean by "large" precisely this. Note that the set of large primes is co-finite. For a subset $A$ of $H^1(G,\B{F}_p)$, the $\B{F}_p$--span of $A$ will be denoted by $\B{F}_p[A]$.\smallskip\smallskip

\nid Define
\[ H_\ell = \phi_\ell^{-1} (\B{F}_\ell[\phi_\ell([z_1])]) \]
where $[z_1]$ is one of the conjugacy classes in $G(\Delta)$ making up $[z]_{\pi_{0,4}}$; in general it will be a finite union of conjugacy classes in $G$.\smallskip\smallskip

\nid Let $[\gamma_1], [\gamma_2], [\gamma_3] \in \pi_{0,4}$ be the three puncture classes which do not vanish in $\pi_{0,3}$.  For each $i=1,2,3$, denote by $Y_i$ the intersection of $[\gamma_i]$ with $G$; this is a finite union of conjugacy classes in $G$.  Define
\[ H_i = \phi_{p_i} ^{-1} (\B{F}_{p_i}[\phi_{p_i}(Y_i)]). \]
Finally, we define $H = H_\ell \cap H_1 \cap H_2 \cap H_3$.  (We emphasize that the use of $H_1, H_2,$ and $H_3$ is simply to ensure that the third condition of Proposition~\ref{pr:04to11} is satisfied.)

\begin{prop}  The stabilizer of $[H]_{\pi_{0,4}}$ in $\Mod(S_{0,4})$ is $\Delta / \set{\pm 1}$.
\label{pr:stabilizer}
\end{prop}

\begin{proof}  For simplicity, write $\bar{\Delta}$ for $\Delta / \set{\pm 1}$. First, we observe that $\bar{\Delta}$ preserves the conjugacy class of $H$.  By Lemma~\ref{ActionOnG}, we know $\bar{\Delta}$ preserves the conjugacy class of $G$, and the sets $[z_1], Y_1, Y_2, Y_3$; it follows that $\bar{\Delta}$ preserves the conjugacy class of $H$, which is defined in terms of these.  Now suppose $\gamma \in \Mod(S_{0,4})$ preserves the conjugacy class of $H$, but is not in $\bar{\Delta}$.  Then $\gamma([z_1]) = [z_j]$ for some $j \neq 1$.  Define
\[ H_\ell^j = \phi_\ell^{-1} (\B{F}_\ell[\phi_\ell([z_j])]) \]
and
\[ H^j = H_\ell^j \cap H_1 \cap H_2 \cap H_3. \]
Then $\gamma(H) = H^j$ and what we need to prove is that $H^j$ and $H$ are not conjugate in $\pi_{0,4}$.  Suppose on the contrary that
\[ H^j = \eta H \eta^{-1} \]
for some $\eta \in \pi_{0,4}$.  Then we have
\[ H_j \subset G \cap \eta G \eta^{-1}  \subset G \]
But the index $[G:H_j]$ is a product of powers of $p_1, p_2, p_3, \ell$, while the index $[G: G \cap \eta G \eta^{-1}]$ is coprime to $p_1, p_2, p_3, \ell$ by hypothesis; so the latter index must be $1$, which is to say that $\eta G \eta^{-1} = G$.  In particular, this implies that the projection of $\eta$ to $\pi_{0,3}$ normalizes $\bar{\Delta}'$.  By our choice of $\Delta'$, this implies that $\eta$ projects to $\bar{\Delta}$, so that $\eta$ itself lies in $G(\Delta)$. However, we see that $\eta [z_1] \eta^{-1} = [z_1]$, contradicting the hypothesis that $j \neq 1$.
\end{proof}

\begin{prop} $H$ satisfies the conditions of Proposition~\ref{pr:04to11}.
\end{prop}

\begin{proof} First of all, $G$ contains $[z]_{\pi_{0,4}}$, and thus is not contained in $\pi_{1,4}$.  Since $H$ has odd index in $G$, we have that $H$ is also not contained in $\pi_{1,4}$.\smallskip\smallskip

\nid Suppose $\gamma \in \Mod(S_{0,4})$ does not preserve the conjugacy class of $H$; that is, it does not lie in $\bar{\Delta}$.  Let $\alpha$ be an automorphism of $\pi_{0,4}$ in the outer class of $\gamma$.  Then $\alpha(H) \cap H$ is a subgroup of $H$ containing $H \cap H^j$ for some $j \neq 1$.  Since $[H: H \cap H^j]$ is a power of $\ell$, we certainly have that $[H: \alpha(H) \cap H] > 2$ once it is not $1$.\smallskip\smallskip

\nid Finally, we consider the action of the puncture classes $\gamma_1, \gamma_2, \gamma_3$ on the cosets of $H$ in $\pi_{0,4}$. The {\em order} of the permutation attached to $\gamma_i$ is just the smallest integer $n_i$ such that every $\pi_{0,4}$--conjugate of $\gamma_i^{n_i}$ is contained in $H$.  It follows from the definitions that $n_1$ is prime to $p_1$ but a multiple of $p_2$ and $p_3$ , and that the analogous statements hold for $n_2,n_3$.  Similarly, the order of the permutation given by the action of $z$ on $\pi_{0,4} / H$ is a multiple of $p_1 p_2 p_3$.  So no two of these four permutations have the same order, and a fortiori they are non-conjugate in $\textrm{Sym}(N)$.
\end{proof}

\nid It now follows from Proposition~\ref{pr:04to11} and Proposition~\ref{pr:stabilizer} that the stabilizer of $[H]_{\pi_{1,1}}$ in $\Gamma(2)$ is precisely $\Delta$.  So $\Delta$ is a Veech group, as advertised.

\section{Pushing curves from $\MM_{g,n}$ down to $\MM_g$}

\nid We conclude by proving, as promised in the introduction, that the Teichm\"{u}ller curves we construct in $\MM_{g,[n]}$ can in fact be thought of as Teichm\"{u}ller curves in moduli spaces of unmarked algebraic curves.

\begin{prop} \label{pr:mg}
Let $X/\overline{\B{Q}}$ be an algebraic curve.  Then there exists an integer $g'$ and a Teichm\"{u}ller curve $V$ in $\MM_{g'}(\C)$ birational to $X_\B{C}$.
\end{prop}

\begin{proof}  As in the main argument, write $X$ as a Belyi cover of $\P^1$ and let $\Delta$ be the corresponding finite index subgroup of $\Gamma(2)$.  By Theorem \ref{th:veech} we may choose a finite index subgroup $\Delta_0$ of $\pi_{1,1}$ such that $\Delta$ is the stabilizer of the conjugacy class $[\Delta_0]$.\smallskip\smallskip

\nid First, choose some large prime $p$ and let $\Delta_p$ be the kernel of the projection from $\Delta_0$ to $H^1(\Delta_0,\Z/p\Z)$. Visibly, $\Delta$ stabilizes $[\Delta_p]$ and we assert for sufficiently large $p$ that $\Delta$ is precisely the stabilizer of $[\Delta_p]$. To begin, note that for all $p$, the group $\Delta_0/\Delta_p$ is an elementary $p$--subgroup of $\Nor_{\pi_{1,1}}(\Delta_p)/\Delta_p$. By the Sylow Theorems, $\Delta_0/\Delta_p$ is contained in a Sylow $p$--subgroup $P$. Notice that if $\Delta_0/\Delta_p \ne P$, then
\[ [\pi_{1,1}:\Delta_0] \geq [\Nor_{\pi_{1,1}}(\Delta_p):\Delta_0] \geq [P:\Delta_0/\Delta_p] \geq p, \]
which is impossible for sufficiently large primes $p$. Therefore, for large primes $p$, we now see that $\Delta_0/\Delta_p$ is a Sylow $p$--subgroup of $\Nor_{\pi_{1,1}}(\Delta_p)/\Delta_p$. Next, assume that there exists Sylow $p$--subgroup $P$ that is distinct from $\Delta_0/\Delta_p$. It follows then that
\[ [P: P \cap \Delta_0/\Delta_p] \geq p. \]
On the other hand, by elementary group theory, we know that
\[ [\pi_{1,1}:\Delta_0] \geq [\Nor_{\pi_{1,1}}(\Delta_p):\Delta_0] \geq [\Nor_{\pi_{1,1}}(\Delta_p)/\Delta_p: \Delta_0/\Delta_p] \geq [P: P \cap \Delta_0/\Delta_p]. \]
Hence, for sufficiently large $p$, we see that $\Delta_0/\Delta_p$ is a normal Sylow $p$--subgroup of $\Nor_{\pi_{1,1}}(\Delta_p)/\Delta_p$. In particular, any element that stabilizes $[\Delta_p]$ must stabilize $[\Delta_0]$, and so $\Delta$ must be the stabilizer of $[\Delta_p]$ as asserted. This argument, in turn, gives us another expression of $X$ as a Teichm\"{u}ller curve.  We have a diagram
\[ X \longrightarrow \MM_{g,[n]} \stackrel{f}{\longrightarrow} \MM_{g',[n']}, \]
where the first map is the birational embedding associated to our original choice of $\Delta_0$, and the second map $f$ is defined as follows. Let $C$ be a genus $g$ curve and $S$ a set of $n$ points on $G$. Set $C'$ be the maximal Galois cover of $C$ unramified outside $S$ whose Galois group is an elementary abelian $p$--group, and $S'$ be the preimage of $S$ in $C'$. Finally, we define $f(C,S)$ to be $(C',S')$, and the composite map $X \ra \MM_{g',[n']}$ is the realization of $X$ as Teichm\"{u}ller curve attached to $\Delta_p$.\smallskip\smallskip

\nid It remains to show that the composite
\[ X \longrightarrow \MM_{g',[n']} \longrightarrow \MM_{g'}, \]
where the second map is the the projection forgetting the marked points, is generically one-to-one. We proceed via contradiction and set $A = (\Z/p\Z)^{2g + n - 1}$ be the Galois group of any covering $C'/C$ as above. By assumption, there exists a curve $Y$ of genus $g'$ which is expressible in two different ways as a $A$--cover of a genus $g$ curve.  In particular, $\Aut(Y)$ contains two distinct copies of $A$, and so
\[ \abs{\Aut(Y)} \geq p\abs{A}. \]
On the other hand, it follows from the Riemann--Hurwitz formula that
\[ g' <  c\abs{A}. \]
for some constant $c=c(g,n)$ depending only on $g$ and $n$.  In particular, since an algebraic curve of genus $g'$ can have at most $84(g'-1)$ automorphisms, we have a contradiction once $p$ is larger than $84c$. Thus, if two points of $X$ map to the same point of $\MM_{g'}$, they map to the same point of $\MM_{g,[n]}$; and since the map from $X$ to $\MM_{g,[n]}$ is generically injective, so is the map from $X$ to $\MM_{g'}$.  We have thus expressed a curve birational to $X$ as a Teichm\"{u}ller curve in a moduli space of closed Riemann surfaces, as desired.
\end{proof}


\providecommand{\bysame}{\leavevmode\hbox to3em{\hrulefill}\thinspace}


\noindent Department of Mathematics \\
University of Wisconsin at Madison \\
Madison, WI 53706, USA \\
email: {\tt ellenber@math.wisc.edu}\\

\noindent Department of Mathematics \\
University of Chicago \\
Chicago, IL 60637, USA \\
email: {\tt dmcreyn@math.uchicago.edu}\\


\end{document}